\documentclass[11pt,letterpaper]{amsart}

\usepackage{tikz}
\usetikzlibrary{arrows.meta}
\usepackage{url}
\usepackage{graphicx}

\usepackage{amssymb}
\usepackage[OT1]{fontenc}
\usepackage{enumitem}

\allowdisplaybreaks%

{\theoremstyle{plain}%
 \newtheorem{theorem}{Theorem}
 
 \newtheorem{lemma}{Lemma}%
}
{\theoremstyle{remark}

}
{\theoremstyle{definition}

}

\title[Pinch's conjecture on $a$-convexity]{Pinch's conjecture on $a$-convexity}
\author{John M.\ Campbell}
\address{Department of Mathematics and Statistics, Dalhousie University,
Halifax, NS B3H 4R2, Canada}
\email{jh241966@dal.ca}
 
\keywords{convex hull, algebraic integer, totally real number, algebraic conjugate, Bernstein basis polynomial}

\subjclass[2020]{Primary 11R04; Secondary 41A10}

\begin{document}

\begin{abstract}
 For $a$ in $\mathbb{R}$, a subset $V$ contained in $\mathbb{R}^{n}$ is said to be \emph{$a$-convex} if $\text{{\bf x}}, \text{{\bf y}} \in V 
 \Longrightarrow a \text{{\bf x}} + (1-a) \text{{\bf y}} \in V$. According to Pinch [\emph{Math.\ Proc.\ Cambridge Philos.\ Soc.}, 1985], the 
 \emph{$a$-convex hull} of $V$ is the intersection of all of the $a$-convex subsets of $\mathbb{R}^{n}$ that contain $V$, and Pinch also 
 defines $D(a)$ as the $a$-convex hull of $\{ 0, 1 \}$ in $\mathbb{R}^{1}$. Pinch conjectured that if $a$ is a totally real algebraic integer 
 and $D(a)$ has no limit points, then every algebraic conjugate of $a$ other than $a$ is in $(0, 1)$. We succeed in proving this 
 conjecture, which seems to have remained open. 
 \end{abstract}

\maketitle

\section{Introduction}
 The purpose of this paper is to prove a conjecture due to Pinch \cite{Pinch1985} related to what Pinch refers to as an \emph{$a$-convex 
 set}, referring to the preliminaries covered below. Based on extant work citing Pinch's paper 
 \cite{BhattacharyaRosenfeld2000,FennerGreenHomer2026,MasakovaPateraPelantova2001,MasakovaPelantovaSvobodova2000,
Svobodova2001},  it appears that Pinch's conjecture has remained open.   Our proof of Pinch's conjecture is based on our extensive 
 interactions with GPT-5.6 Pro. 

 Recall that an \emph{algebraic integer} is a root of a monic polynomial with coefficients in $\mathbb{Z}$. 
 For an algebraic number $\alpha$, letting $m_{\alpha}(X) \in \mathbb{Q}[X]$ denote
 its minimal polynomial over $\mathbb{Q}$, 
 an \emph{algebraic conjugate} of $\alpha$ refers to any complex root of $m_{\alpha}(X)$. 
 An algebraic number is said to be \emph{totally real} if each algebraic conjugate of this number is a real number. 
 A \emph{nontrivial conjugate} of $\alpha$ refers to a conjugate distinct from $\alpha$. 

 Following Pinch's work \cite{Pinch1985}, 
 a set $V \subseteq \mathbb{R}^{n}$ is \emph{$a$-convex} for a value $a \in \mathbb{R}$
 if: For all $\text{{\bf x}}$ and $\text{{\bf y}}$ in $V$, we have that 
 $a \text{{\bf x}} + (1-a) \text{{\bf y}}$ is in $V$. 
 Also, the \emph{$a$-convex hull} of $V$ is defined by Pinch as the intersection 
 of the $a$-convex subsets that contain $V$ and that are contained in $\mathbb{R}^{n}$. 
 Pinch also defines $D(a)$ as the $a$-convex hull of $\{ 0, 1 \}$ in $\mathbb{R}$. 

 For an indeterminate $X$, let $D_0(X) = \{ 0, 1 \}$. 
 We then recursively define $D_{n+1}(X)$ so that 
 $D_{n+1}(X) = \{ X f(X) + (1-X) g(X) : f, g \in D_n(X) \}$. 
 Moreover, we let $D(X) = \bigcup_{n=0}^{\infty} D_n(X)$, and for $a \in \mathbb{R}$, 
 we let 
\begin{equation}\label{defineDa}
 D(a) = \{ f(a) : f \in D(X) \}. 
\end{equation}
 It is proved by Pinch \cite[Proposition 1]{Pinch1985} that $D(a)$, as defined in 
 \eqref{defineDa}, is the $a$-convex hull of $\{ 0, 1 \}$.
 Again adopting notation from Pinch, 
 a real parameter $a$ is said to be \emph{sparse} if $D(a)$ 
 does not have any limit points. 

 Pinch \cite[Proposition 12]{Pinch1985} proved that: If $a \not\in (0, 1)$ is a totally real algebraic integer and if each nontrivial conjugate of 
 $a$ is in $(0, 1)$, then $a$ is sparse. 
 Pinch also proved the converse for the quadratic case, and considered the problem
 as to whether or not the converse holds in full generality. 
 This is made explicit below, via what we refer to as \emph{Pinch's conjecture}, 
 as given in an equivalent way by Pinch in 1985. 

 \ 

\noindent {\bf Pinch's conjecture:} Let $a$ denote a totally real algebraic integer.
 If $a$ is sparse, then each conjugate of $a$ distinct from $a$ 
 is in $(0, 1)$. 

 \ 

\noindent The main purpose of this paper is to prove Pinch's conjecture, 
 as in Section \ref{secmain} below. 

\section{Main result}\label{secmain}
 The following result is required for our purposes and was established by Pinch 
 \cite[Corollary 4.1]{Pinch1985}, and we thus omit a proof of 
 the following result from Pinch. 

\begin{lemma}\label{Pinchlemma}
 (Pinch, 1985) For each $n \geq 0$, 
 the equality 
 $$ D_n(X) = \left\{ \sum_{k=0}^{n} c_k X^k(1-X)^{n-k} 
 : c_k \in \mathbb{Z}, \ 0 \leq c_k \leq \binom{n}{k} \right\} $$
 holds \cite[Corollary 4.1]{Pinch1985}. 
\end{lemma}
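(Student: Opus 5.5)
I will prove the equality by induction on $n$. Throughout I write $B_n$ for the right-hand side, so that
$$B_n = \left\{ \sum_{k=0}^{n} c_k X^k(1-X)^{n-k} : c_k \in \mathbb{Z},\ 0 \le c_k \le \binom{n}{k} \right\}.$$
The base case $n=0$ is immediate, since $B_0=\{0,1\}=D_0(X)$. Before starting the induction I will record that the Bernstein-type polynomials $X^k(1-X)^{n-k}$, for $0\le k\le n$, are linearly independent and form a basis of the polynomials of degree at most $n$. This lets me identify each element of $B_n$ with its coefficient vector $(c_0,\dots,c_n)$. It is convenient, but I expect the argument to need only the existence of representations, not their uniqueness.

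For the inclusion $D_{n+1}(X)\subseteq B_{n+1}$, let $f=\sum_k a_k X^k(1-X)^{n-k}$ and $g=\sum_k b_k X^k(1-X)^{n-k}$, with $0\le a_k,b_k\le\binom nk$. Then
$$Xf+(1-X)g=\sum_{j=0}^{n+1}(a_{j-1}+b_j)\,X^j(1-X)^{n+1-j},$$
with the conventions $a_{-1}=b_{n+1}=0$. Each coefficient $c_j=a_{j-1}+b_j$ is an integer satisfying
$$0\le c_j\le\binom{n}{j-1}+\binom nj=\binom{n+1}{j},$$
by Pascal's rule. So $Xf+(1-X)g$ lies in $B_{n+1}$.

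For the reverse inclusion $B_{n+1}\subseteq D_{n+1}(X)$, I am given integers $c_j$ with $0\le c_j\le\binom{n+1}{j}$. I must split each one as $c_j=a_{j-1}+b_j$ with $0\le a_{j-1}\le\binom n{j-1}$ and $0\le b_j\le\binom nj$. The splitting decouples across $j$, since each $a_i$ and each $b_j$ appears in exactly one equation. For each $j$ I will choose $b_j=\min(c_j,\binom nj)$ and $a_{j-1}=c_j-b_j$. Then $0\le a_{j-1}\le \binom{n+1}{j}-\binom nj=\binom n{j-1}$. The endpoint cases also work: at $j=0$ we have $c_0\le1=\binom n0$, which forces $a_{-1}=0$, and at $j=n+1$ we have $\binom{n}{n+1}=0$, so $b_{n+1}=0$. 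With these choices $f=\sum a_kX^k(1-X)^{n-k}$ and $g=\sum b_kX^k(1-X)^{n-k}$ both lie in $B_n=D_n(X)$, and $Xf+(1-X)g$ is the given element.

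The only delicate point is this reverse inclusion. It requires checking that the greedy split respects both bounds simultaneously, which is exactly Pascal's identity used in the reverse direction, together with correct handling of the boundary indices. Everything else is bookkeeping.
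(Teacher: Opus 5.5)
The paper gives no proof of this lemma; it quotes it from Pinch \cite[Corollary 4.1]{Pinch1985} and omits the argument. Your proof is correct and complete, so it stands as a self-contained replacement for that citation rather than a competing route.

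Both directions check out, including the edge indices. With $a_{-1}=b_{n+1}=0$ and $\binom{n}{-1}=\binom{n}{n+1}=0$, Pascal's rule $\binom{n}{j-1}+\binom{n}{j}=\binom{n+1}{j}$ holds for every $0\le j\le n+1$. In the reverse direction, the greedy choice $b_j=\min\bigl(c_j,\binom{n}{j}\bigr)$ gives $a_{j-1}=\max\bigl(0,c_j-\binom{n}{j}\bigr)\in\bigl[0,\binom{n}{j-1}\bigr]$. Here $c_0\le 1$ forces $a_{-1}=0$, and $\binom{n}{n+1}=0$ forces $b_{n+1}=0$. You are also right that linear independence of the $X^k(1-X)^{n-k}$ is never used, since membership in the right-hand side only requires one admissible representation.

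One point is worth making explicit. Each inclusion at level $n+1$ uses only the same inclusion at level $n$, so the two directions are independent inductions. The paper only ever invokes the inclusion of the right-hand side into $D_N(X)$. It does so at the end of the proof of Lemma \ref{lemmaBernstein}, after establishing $0\le c_{k,N}\le\binom{N}{k}$. That inclusion follows from your greedy-split induction alone.
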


 For $0 \leq k \leq n$, we let \emph{Bernstein basis polynomials} be defined and denoted so that 
\begin{equation}\label{displayBernstein} 
 B_{k, n}(X) = \binom{n}{k} X^{k} (1-X)^{n-k}. 
\end{equation}
 For fixed $n$, the family of expressions of the form shown in \eqref{displayBernstein} 
 is a basis of 
 $\{ p(X) \in \mathbb{R}[X] : \deg p \leq n \}$.

\begin{lemma}\label{lemmalargen}
 Let $R \in \mathbb{R}[X]$ satisfy $R(x) > 0$ for $x \in (0, 1)$
 and $R(0), R(1) \geq 0$. 
 Then, for all sufficiently large $n$, the coefficients of $R$ as a linear combination of expressions of the form
 $B_{k, n}(X)$ are all nonnegative. 
\end{lemma}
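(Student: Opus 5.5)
The plan is to reduce to the case where $R$ is strictly positive on the closed interval $[0,1]$, and then handle that case with an explicit formula for the Bernstein coefficients.

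\textbf{Reduction.} Since $R>0$ on $(0,1)$, $R$ is not the zero polynomial. Write
$$R(X)=X^{i}(1-X)^{j}S(X),$$
where $i,j\geq 0$ are the orders of vanishing of $R$ at $0$ and $1$, so that $S(0)\neq 0$ and $S(1)\neq 0$. On $(0,1)$ the factor $X^i(1-X)^j$ is positive, so $S>0$ there. By continuity $S(0)\geq 0$ and $S(1)\geq 0$, and since both are nonzero, $S>0$ on all of $[0,1]$.

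The reduction then rests on the identity
$$X^{i}(1-X)^{j}B_{k,m}(X)=\frac{\binom{m}{k}}{\binom{m+i+j}{k+i}}\,B_{k+i,m+i+j}(X).$$
It shows that if $S=\sum_k c_kB_{k,m}$ with all $c_k\geq 0$, then $R$ has nonnegative coefficients in the basis $\{B_{\ell,m+i+j}\}$. Hence it suffices to prove the statement for $S$ and all sufficiently large $m$.

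\textbf{Positive case.} Let $S(X)=\sum_{j=0}^{d}a_jX^j$ with $S>0$ on $[0,1]$, and let $n\geq d$. I would use the standard expansion
$$X^{j}=\sum_{k=j}^{n}\frac{\binom{k}{j}}{\binom{n}{j}}B_{k,n}(X).$$
This follows from $X^j=X^j(X+(1-X))^{n-j}$ by expanding binomially and reindexing. Consequently the coefficient of $B_{k,n}$ in $S$ is
$$c_{k,n}=\sum_{j=0}^{d}a_j\frac{k(k-1)\cdots(k-j+1)}{n(n-1)\cdots(n-j+1)}.$$

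\textbf{Main step.} The key estimate is that, uniformly for $0\leq k\leq n$,
$$\frac{(k)_j}{(n)_j}=\Bigl(\frac{k}{n}\Bigr)^{j}+O_j\!\left(\frac1n\right),$$
where $(k)_j$ denotes the falling factorial. To see this, compare each factor: $\frac{k-r}{n-r}$ and $\frac{k}{n}$ lie in $[0,1]$ when $r<k$ and differ by at most $\frac{r}{n-r}$, and a product of $j$ numbers in $[0,1]$ moves by at most the sum of the individual changes. When $k<j$ the falling factorial is $0$ and $(k/n)^j\leq (j/n)^j$, so the bound still holds.

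Summing over $j$ gives
$$c_{k,n}=S(k/n)+O\!\left(\frac1n\right)\geq \min_{[0,1]}S-\frac{C}{n},$$
with $C$ depending only on the $a_j$ and on $d$. Since $\min_{[0,1]}S>0$, this is positive for all $n$ large, which proves the claim for $S$ and hence, by the reduction, for $R$.

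The only genuinely delicate point is this uniformity in $k$; everything else is bookkeeping. An alternative route is to appeal to Pólya's theorem on positive forms, applied to the homogenization of $S$ in the variables $X$ and $Y=1-X$ on the simplex, but the direct estimate is self-contained.
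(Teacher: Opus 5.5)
Your proposal is correct and follows essentially the same route as the paper. It factors out $X^{r}(1-X)^{s}$ to get a cofactor positive on $[0,1]$, writes that cofactor's degree-$m$ Bernstein coefficients as $\sum_j h_j\binom{k}{j}/\binom{m}{j}$, shows these equal $H(k/m)+O(1/m)$ uniformly in $k$ by the factor-by-factor telescoping estimate, and transfers back to degree $m+r+s$ via $X^{r}(1-X)^{s}B_{k,m}(X)=\frac{\binom{m}{k}}{\binom{m+r+s}{k+r}}B_{k+r,m+r+s}(X)$.
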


\begin{proof}
 We begin with the factorization 
\begin{equation}\label{defineRX}
 R(X) = X^r (1-X)^s H(X)
\end{equation}
 for $r, s \geq 0$ and where neither $1-X$ nor $X$ divides $H(X)$, noting that $R(X)$ is not the zero polynomial from the assumption 
 that $R(x) > 0$ for $x \in (0, 1)$. From the assumption that $R$ is positive on $(0, 1)$, this and \eqref{defineRX} together give us that 
 $H(x) > 0$ for $x \in (0, 1)$. 
 The specified values of $r$ and $s$ are, respectively, the exact multiplicities of the zeros of $R$ at $0$ and $1$. 
 This gives us that $H(0) \neq 0$ and that $H(1) \neq 0$. 
 Since $x^r$ and $(1-x)^s$ are strictly positive for $x \in (0, 1)$, 
 and since $H(x) = \frac{R(x)}{x^r (1-x)^s}$ is continuous and since $R(x) > 0$ for $x \in (0, 1)$, 
 we find that $H(0) = \lim_{x \to 0^{+}} H(x) \geq 0$. 
 From this and $H(0) \neq 0$ together, we have that $H(0) > 0$, 
 and a symmetric argument allows us to similarly conclude that $H(1) > 0$. 
 So, we have that $H(x) > 0$ for $x \in [0, 1]$. 
 Now, write 
\begin{equation}\label{coefficientsHX} 
 H(X) = \sum_{j=0}^{\deg H} h_j X^j,
\end{equation}
 for real coefficients 
 $h_{j}$ for $j \in \{ 0, 1, \ldots, \deg H \}$. 
 For an arbitrary integer $m \geq \deg H$, we 
 obtain a unique degree-$m$ Bernstein expansion of $H(X)$, writing
\begin{equation}\label{HgammaB}
 H(X) = \sum_{k=0}^{m} \gamma_{k, m} B_{k, m}(X), 
\end{equation}
 with 
\begin{equation}\label{definegamma}
 \gamma_{k, m} = 
 \sum_{j=0}^{\min\{ \deg H, k \}} h_{j} \frac{ \binom{k}{j} }{ \binom{m}{j} } = 
 \sum_{j=0}^{ \deg H } h_{j} \frac{ \binom{k}{j} }{ \binom{m}{j} }. 
\end{equation}

 Let $k \in [0, m]$. For fixed $j \geq 1$, if $k \geq j$, then $\frac{ \binom{k}{j} }{ \binom{m}{j} } = \prod_{\ell = 0}^{j-1} \frac{k-\ell}{m - 
 \ell}$. If $m \geq 2j$, then all of the factors of the form $\frac{k-\ell}{m-\ell}$
 are in $[0, 1]$, and, moreover, we find that 
 $\big| \frac{k-\ell}{m-\ell} - \frac{k}{m} \big| = \frac{\ell(m-k)}{m(m-\ell)} \leq \frac{\ell}{m - \ell} \leq \frac{2\ell}{m}$. 
 If $a_i, b_i \in [0, 1]$ for all $i \in \{ 1, 2, \ldots, j \}$, 
 then $\big| \prod_{i=1}^{j} a_{i} - \prod_{i=1}^{j} b_{i} \big| \leq \sum_{i=1}^{j} |a_i - b_i|$, 
 as may be verified via a telescoping argument. Recalling that $k \leq m$, we thus obtain that 
\begin{equation}\label{proddiff}
 \left| \frac{ \binom{k}{j} }{ \binom{m}{j} } - \left( \frac{k}{m} \right)^{j} \right| 
 \leq \frac{j(j-1)}{m}. 
\end{equation}
 If $k < j$, then $\frac{ \binom{k}{j} }{ \binom{m}{j} }$ vanishes. 
 Also if $k < j$, 
 then $0 \leq \left( \frac{k}{m} \right)^{j} \leq \left( \frac{j-1}{m} \right)^{j} = O_{j}\left( \frac{1}{m} \right)$. 
 This gives us that 
\begin{equation}\label{estimatequotient}
 \frac{\binom{k}{j}}{\binom{m}{j}} = \left( \frac{k}{m} \right)^{j} + O_{j}\left( \frac{1}{m} \right) 
\end{equation}
 uniformly for integers $k \in [0, m]$ as $m \to \infty$. From \eqref{definegamma} and 
 \eqref{estimatequotient}, we have that 
\begin{equation}\label{gammasumO}
 \gamma_{k, m} = 
 \sum_{j=0}^{ \deg H } h_{j} 
 \left( \left( \frac{k}{m} \right)^{j} + O_{j}\left( \frac{1}{m} \right) \right), 
\end{equation}
 uniformly for integers $k \in \{ 0, 1, \ldots, m \}$ as $m \to \infty$. 
 Consequently, the relations in \eqref{coefficientsHX} and 
 \eqref{gammasumO} together give us that 
\begin{equation}\label{gammaOH} 
 \gamma_{k, m} = H\left( \frac{k}{m} \right) + O_{H}\left( \frac{1}{m} \right), 
\end{equation}
 and \eqref{gammaOH} holds uniformly for integers $k \in \{ 0, 1, \ldots, m \}$ as $m \to \infty$. 
 Since we have established that $H(x) > 0$ for each $x$ in $[0, 1]$, 
 this and \eqref{gammaOH} together give us that:
 For sufficiently large $m$, the relation $\gamma_{k, m} > 0$ holds for all indices $k$. 

 From \eqref{defineRX} and \eqref{HgammaB} together, we find that 
\begin{equation}\label{fromRandH} 
 R(X) = \sum_{k=0}^{m} \gamma_{k, m} \binom{m}{k} X^{k+r} (1-X)^{m-k+s}, 
\end{equation}
 and we proceed to rewrite the right-hand side of \eqref{fromRandH} with 
 Bernstein basis polynomials, writing
\begin{equation}\label{finalRX} 
 R(X) = \sum_{k=0}^{m} \gamma_{k, m} \frac{ \binom{m}{k} }{ \binom{m+r+s}{k+r} } 
 B_{k+r, m+r+s}(X). 
\end{equation}
 For $m$ sufficiently large, the coefficients of the terms on the right of \eqref{finalRX} are all nonnegative. 
 Since this property holds for $m$ sufficiently large, the same property 
 holds for $n = m+r+s$ sufficiently large, and hence the desired result. 
\end{proof}

 The following Lemma is essentially the converse direction of Theorem 3.6 from the work of Fenner et al.\ \cite{FennerGreenHomer2026}. 

\begin{lemma}\label{lemmaBernstein}
 Let $P \in \mathbb{Z}[X]$ be such that $P(0), P(1) \in \{ 0, 1 \}$ and such that $P(x) \in (0, 1)$ for each $x$ in $(0, 1)$. Then $P$ 
 is in $D(X)$ (cf.\ \cite[Theorem 3.6]{FennerGreenHomer2026}). 
\end{lemma}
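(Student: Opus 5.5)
By Lemma \ref{Pinchlemma}, it suffices to find a single $n$ such that the degree-$n$ Bernstein expansion of $P$ has coefficients $c_k$ that are integers with $0 \leq c_k \leq \binom{n}{k}$ for every $k$, where we write $P(X) = \sum_{k=0}^{n} c_k X^k(1-X)^{n-k}$. The approach is to bound the coefficients below via the expansion of $P$ and above via the expansion of $1-P$, using Lemma \ref{lemmalargen} for both.

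\textbf{Integrality.} This holds for every $n \geq \deg P$. Write $P(X) = \sum_j p_j X^j$ with $p_j \in \mathbb{Z}$. Multiply each monomial by $1 = (X + (1-X))^{n-j}$, so that $X^j = \sum_{i} \binom{n-j}{i} X^{j+i}(1-X)^{n-j-i}$. Then every $c_k$ is an integer combination of the $p_j$. Equivalently, $c_k = \binom{n}{k}\gamma_{k,n}$ with $\gamma_{k,n}$ as in \eqref{definegamma}, and $\binom{n}{k}\binom{k}{j}/\binom{n}{j} = \binom{n-j}{k-j} \in \mathbb{Z}$.

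\textbf{Lower bound $c_k \geq 0$.} Apply Lemma \ref{lemmalargen} to $R = P$. This is allowed because $P > 0$ on $(0,1)$ and $P(0), P(1) \in \{0,1\}$ are nonnegative. Hence for all large $n$, the coefficients of $P$ in the basis $B_{k,n}$ are nonnegative. These coefficients are $c_k/\binom{n}{k}$, so $c_k \geq 0$.

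\textbf{Upper bound $c_k \leq \binom{n}{k}$.} Apply Lemma \ref{lemmalargen} to $R = 1 - P$. Since $P(x) < 1$ on $(0,1)$, we have $R > 0$ there, and $R(0), R(1) \in \{0,1\}$ are nonnegative. Now $1 = \sum_{k} B_{k,n}(X)$, so the degree-$n$ Bernstein coefficients of $1-P$ are $1 - c_k/\binom{n}{k}$. For all large $n$ these are nonnegative, which gives $c_k \leq \binom{n}{k}$.

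\textbf{Conclusion.} Choose $n$ large enough for both applications of Lemma \ref{lemmalargen} and with $n \geq \deg P$. Then $0 \leq c_k \leq \binom{n}{k}$ with $c_k \in \mathbb{Z}$ for all $k$, so Lemma \ref{Pinchlemma} gives $P \in D_n(X) \subseteq D(X)$.

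One point needs checking: Lemma \ref{lemmalargen} guarantees nonnegativity for all sufficiently large $n$, not just for some subsequence. This holds because degree elevation from $m + r + s$ to any larger degree preserves nonnegativity of Bernstein coefficients, so a common large $n$ exists for $P$ and $1-P$. The substantive work lies in Lemma \ref{lemmalargen} itself, which handles zeros of $R$ at the endpoints through the factor $X^r(1-X)^s$.
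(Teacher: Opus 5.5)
Your proof is correct and matches the paper's argument essentially step for step. Like the paper, you apply Lemma~\ref{lemmalargen} to both $P$ and $1-P$ at a common degree $N \geq \deg P$, and use $\sum_k B_{k,N} = 1$ to get $0 \leq c_k/\binom{N}{k} \leq 1$. You then get integrality of $c_k$ from the expansion $X^j = \sum_{k} \binom{N-j}{k-j} X^k (1-X)^{N-k}$ and invoke Lemma~\ref{Pinchlemma}. Your extra remark on degree elevation is valid but unnecessary: Lemma~\ref{lemmalargen} already gives nonnegativity for all sufficiently large $n$, since $n = m+r+s$ with $r,s$ fixed.
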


\begin{proof}
 Both $P$ and $1-P$ satisfy the conditions of Lemma \ref{lemmalargen}. Choose an integer 
 $N \geq \deg P$ sufficiently large so that the conclusion of Lemma \ref{lemmalargen} holds, 
 at degree $N$, for both $P$ and $1-P$. We may then write
\begin{equation}\label{PXexpandB} 
 P(X) = \sum_{k=0}^{N} \beta_{k, N} B_{k, N}(X), 
\end{equation}
 with $\beta_{k, N} \geq 0$ for each coefficient of the form $\beta_{k, N}$ arising in 
 the expansion in \eqref{PXexpandB}. 
 Since $1 = \sum_{k=0}^{N} B_{k, N}(X)$ (recalling \eqref{displayBernstein}), 
 the coefficients of $1-P$ arising as a linear combination of expressions of the form $B_{k, N}(X)$
 are of the form $1 - \beta_{k, N}$. 
 By our choice of $N$, the Bernstein coefficients $1 - \beta_{k, N}$ of $1-P$ are nonnegative, and hence
\begin{equation}\label{boundbeta} 
 0 \leq \beta_{k, N} \leq 1
\end{equation}
 for $k \in \{ 0, 1, \ldots, N \}$. 
 
 Write $P(X) = \sum_{j=0}^{\deg P} a_{j} X^{j}$, where $a_j \in \mathbb{Z}$.  The binomial theorem gives us that  $ X^j = \sum_{k=j}^{N} 
 \binom{N-j}{k-j} X^k (1-X)^{N-k} $  for all $j \in \{ 0, 1, \ldots, \deg P \}$. This allows us to rewrite $P(X)$ so that 
\begin{equation}\label{PXckN} 
 P(X) = \sum_{k=0}^{N} c_{k, N} X^{k} (1-X)^{N-k} 
\end{equation}
 for $ c_{k, N} = \sum_{j=0}^{\min\{ k, \deg P \}} a_j \binom{N-j}{k-j}$, 
 noting that the coefficients of the form $c_{k, N}$ are all integers. 
 Comparing the coefficients in the expansions in \eqref{PXexpandB} 
 and \eqref{PXckN} allows us to conclude 
 that $ c_{k, N} = \binom{N}{k} \beta_{k, N}$. 
 From \eqref{boundbeta}, we may deduce that 
 $0 \leq c_{k, N} \leq \binom{N}{k}$, 
 and Pinch's lemma, as given as Lemma \ref{Pinchlemma}, 
 allows us to conclude that $P(X) \in D_{N}(X) \subseteq D(X)$.
\end{proof}

 The following result required for our purposes is a special case of a result due to Pinch
 \cite[Proposition 10]{Pinch1985}. 

\begin{lemma}\label{Pinchnotsparse}
 (Pinch, 1985) For $a \in \mathbb{R}$, if there exists an element $P$ in $D(X)$ such that 
 $0 < P(a) < 1$, then $a$ is not sparse \cite[Proposition 10]{Pinch1985}. 
\end{lemma}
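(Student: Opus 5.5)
The approach is to use the characterization of $D(a)$ as the $a$-convex hull of $\{0,1\}$ (Pinch's Proposition 1, recalled after \eqref{defineDa}), together with the fact that $a$-convexity is preserved under affine maps. Set $b = P(a)$. Since $P \in D(X)$, the definition \eqref{defineDa} gives $b \in D(a)$, and by hypothesis $0 < b < 1$. The goal is to show that $b^n \in D(a)$ for every $n \geq 0$. The powers $b^n$ are pairwise distinct and tend to $0$, so $0$ is then a limit point of $D(a)$, and $a$ is not sparse.

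The key step is the inclusion $b \cdot D(a) \subseteq D(a)$. To prove it, consider $W = \{ x \in \mathbb{R} : b x \in D(a) \}$, the preimage of $D(a)$ under the linear map $x \mapsto bx$.
\begin{itemize}
\item $W$ is $a$-convex. If $x, y \in W$, then $bx, by \in D(a)$. Since $D(a)$ is $a$-convex, $a(bx) + (1-a)(by) = b\bigl(ax + (1-a)y\bigr)$ lies in $D(a)$, so $ax + (1-a)y \in W$.
\item $W$ contains $0$ and $1$. Indeed $b \cdot 0 = 0 \in D(a)$ and $b \cdot 1 = b \in D(a)$.
\end{itemize}
Since $D(a)$ is the intersection of all $a$-convex sets containing $\{0,1\}$, it follows that $D(a) \subseteq W$. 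This is exactly $b \cdot D(a) \subseteq D(a)$.

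Induction then finishes the argument. We have $1 \in D(a)$, and if $b^n \in D(a)$ then $b^{n+1} = b \cdot b^n \in D(a)$. Hence $\{ b^n : n \geq 0 \} \subseteq D(a)$. Because $0 < b < 1$, this is an infinite set of distinct points accumulating at $0$, so $D(a)$ has a limit point.

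The only step needing care is the one using the hull characterization: we must use that $D(a)$ equals the $a$-convex hull, not merely that it is contained in it. This is precisely Pinch's Proposition 1, which the paper has already recalled, so I expect no genuine obstacle. As an alternative, one could verify the inclusion directly from the recursive definition of the sets $D_n(X)$, via $b\bigl(a f(a) + (1-a) g(a)\bigr) = a\,(b f(a)) + (1-a)\,(b g(a))$ and induction on $n$.
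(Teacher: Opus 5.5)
Your proof is correct. The paper gives no proof of this lemma: it quotes it as a special case of Pinch's Proposition 10 and uses it as a black box. So your proposal supplies a self-contained justification rather than an alternative to an argument in the text.

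The pull-back set $W=\{x : bx\in D(a)\}$ is a clean way to obtain $b\cdot D(a)\subseteq D(a)$. It uses both halves of Pinch's Proposition 1, as you correctly flag: $D(a)$ must be $a$-convex, so that $W$ is $a$-convex, and $D(a)$ must lie in every $a$-convex set containing $\{0,1\}$, so that $D(a)\subseteq W$.

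The citation buys the paper brevity. Your route makes the lemma rest only on Proposition 1, which the paper already quotes after \eqref{defineDa}.

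You can remove even that dependence by working with polynomials. The identity $P\,(Xf+(1-X)g)=X(Pf)+(1-X)(Pg)$ and induction on $n$ give $P\cdot D(X)\subseteq D(X)$. Hence $P^n\in D(X)$, and $P(a)^n\in D(a)$ follows directly from \eqref{defineDa}. The inductive step needs $D_n(X)\subseteq D_{n+1}(X)$, which follows by taking $f=g$ in the recursion.

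Alternatively, Lemma \ref{Pinchlemma} and Vandermonde's identity $\sum_{k+l=j}\binom{n}{k}\binom{m}{l}=\binom{n+m}{j}$ give $D_n(X)\cdot D_m(X)\subseteq D_{n+m}(X)$ at once.

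The same nesting fact is the one detail missing from the direct alternative you sketch at the end. To close that induction, you need $D(a)$ as defined in \eqref{defineDa} to be closed under $(u,v)\mapsto au+(1-a)v$, and this is exactly what $D_n(X)\subseteq D_{n+1}(X)$ provides.
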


 Now, let $K \subseteq \mathbb{R}$ be compact, and define
 $ \| p \|_{K} = \max_{x \in K} | p(x) | $ and 
 $ B(K, \mathbb{Z}) = \{ p \in \mathbb{Z}[X] : \| p \|_{K} < 1 \} $ and 
\begin{equation}\label{defineJKZ}
 J(K, \mathbb{Z}) = \{ x \in K : p(x) = 0 \ \text{for each} \ p \in B(K, \mathbb{Z}) \}. 
\end{equation}
 We also write $J_0(K, \mathbb{Z})$ in place of the union of all of the complete conjugacy classes of algebraic integers
 contained in $K$. 
 For $n \geq 2$, the $n^{\text{th}}$ \emph{diameter} of $K$
 is defined so that 
 $$ d_{n}(K) = \max_{x_1, x_2, \ldots, x_n \in K} \left( \prod_{1 \leq i < j \leq n} 
 \left| x_{i} - x_{j} \right| \right)^{\frac{2}{n(n-1)}}. $$
 The \emph{transfinite diameter} of $K$ is then defined so that 
\begin{equation}\label{definetransfinite} 
 d(K) = \lim_{n \to \infty} d_{n}(K). 
\end{equation}

 We require the following result due to Ferguson \cite[Theorem 6.4]{Ferguson1968}. 

\begin{theorem}\label{thmFerguson}
 (Ferguson, 1968) Let $K$ denote a compact subset of $\mathbb{R}$. 
 If $d(K) < 1$, then 
 $J(K, \mathbb{Z}) = J_0(K, \mathbb{Z})$ \cite[Theorem 6.4]{Ferguson1968}. 
\end{theorem}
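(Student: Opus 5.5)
The statement is Ferguson's theorem, which the paper quotes from \cite{Ferguson1968} rather than proving. Here is how I would reprove it. I would prove the two inclusions $J_0(K,\mathbb{Z}) \subseteq J(K,\mathbb{Z})$ and $J(K,\mathbb{Z}) \subseteq J_0(K,\mathbb{Z})$ separately. Only the second should need the hypothesis $d(K)<1$.

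\emph{The inclusion $J_0 \subseteq J$.} Let $\alpha$ be an algebraic integer all of whose conjugates $\alpha_1,\ldots,\alpha_m$ lie in $K$, and let $p \in B(K,\mathbb{Z})$. The product $N=\prod_{i=1}^m p(\alpha_i)$ is a symmetric polynomial in the $\alpha_i$ with integer coefficients. Since $m_\alpha$ is monic in $\mathbb{Z}[X]$, $N$ is a rational integer. On the other hand $|N| \le \|p\|_K^m<1$, so $N=0$. Hence $p(\alpha_i)=0$ for some $i$, and since $p \in \mathbb{Q}[X]$ this gives $m_\alpha \mid p$, so $p(\alpha)=0$. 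Thus $\alpha \in J(K,\mathbb{Z})$.

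\emph{The inclusion $J \subseteq J_0$.} Let $Z\subseteq\mathbb{C}$ be the set of common complex zeros of $B(K,\mathbb{Z})$, so that $J=Z\cap K$.
\begin{enumerate}
\item First I would show $B(K,\mathbb{Z})$ contains a nonzero polynomial. This makes $Z$ finite, and since the polynomials have rational coefficients, $Z$ is a union of full conjugacy classes. Nonemptiness comes from a lattice argument. Choose Fekete points $\xi_0,\dots,\xi_n$ of $K$. Use Minkowski's theorem (or Siegel's lemma) on the map $\mathbb{Z}^{n+1}\to\mathbb{R}^{n+1}$, $p\mapsto (p(\xi_i))_i$. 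Its determinant is the Vandermonde $\prod_{i<j}|\xi_i-\xi_j| = d_{n+1}(K)^{n(n+1)/2}$, which is exponentially small because $d(K)<1$. This yields a nonzero integer $p$ of degree $\le n$ with every $|p(\xi_i)|$ tiny. Lagrange interpolation at Fekete points has Lebesgue constant at most $n+1$, since each $|\ell_i|\le 1$ on $K$. So $\|p\|_K$ is tiny too; in fact one gets $\|p\|_K \le C\,\rho^{n}$ for some $\rho<1$.
\item Next I would show $Z\subseteq K$. For $y\notin K$, I want $p\in B(K,\mathbb{Z})$ with $p(y)\ne 0$. I would rerun the lattice argument with one extra linear functional $p\mapsto p(y)$. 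Concretely, take a short vector in the lattice of integer polynomials satisfying $|p(\xi_i)|\le\varepsilon$, and show the constraints at the $\xi_i$ cannot force $p(y)=0$ for every short vector. The reason is that $y$ is outside $K$, so the Lagrange basis at $y$ grows exponentially while the volume budget stays exponentially small.
\item Finally I would show every $x\in Z$ is an algebraic integer. If it were not, the leading coefficient $c>1$ of the primitive minimal polynomial $g$ of $x$ would obstruct divisibility. Take nonzero $q\in B$ of small norm with $\gcd$ considerations forcing $g\mid q^k$ for every $q\in B$. Then modify $q$ by adding $X^s\cdot(\text{integer multiple of a small-norm polynomial})$ so as to break divisibility by $g$. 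The point is that integer multiples of $g$ form a sublattice of index growing like $c^{\deg}$, which an exponential-volume count cannot fill.
\end{enumerate}
Together these give $J \subseteq J_0$: $Z\subseteq K$ means $J=Z$ is a union of full conjugacy classes lying in $K$, and by the last step these consist of algebraic integers.

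The main obstacle is the second step, and to a lesser extent the third. Both require controlling a Minkowski/Siegel count with an extra nonvanishing constraint. I would first try to make it quantitative by comparing $d_{n}(K)$ with the transfinite diameter of $K\cup\{y\}$, which is strictly larger. If that proves awkward, I would instead invoke the Fekete--Szeg\H{o} theorem on neighborhoods of $K$.
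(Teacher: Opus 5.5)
The paper gives no proof of this statement; it imports it from Ferguson. Your proposal therefore has to stand on its own, and it has a genuine gap. Two parts are correct:
\begin{itemize}
\item the inclusion $J_0(K,\mathbb{Z})\subseteq J(K,\mathbb{Z})$ via the integer norm $\prod_i p(\alpha_i)$;
\item the Fekete lattice argument of step~1, showing that the common zero set $Z$ of $B(K,\mathbb{Z})$ is finite and closed under conjugation.
\end{itemize}
But steps~2 and~3 carry the entire content of the theorem, including what the paper actually uses, such as $a\notin J([0,1]\cup\{a\},\mathbb{Z})$. Neither step is argued.

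\textbf{Step~2.} Let $h_y\in\mathbb{Z}[X]$ be the primitive minimal polynomial of an algebraic $y$. By Gauss's lemma, the integer polynomials of degree at most $n$ vanishing at $y$ form the saturated sublattice $h_y\mathbb{Z}[X]$, of rank $n+1-\deg h_y$.
\begin{itemize}
\item Minkowski's theorem gives you \emph{some} nonzero lattice point in the box. It says nothing about whether that point avoids this sublattice.
\item Avoiding it needs $n+2-\deg h_y$ independent short vectors, that is, an upper bound on a late successive minimum.
\item Minkowski's second theorem bounds only the product of the minima. The lower bound $\|p\|_K\ge d(K)^{\deg p}$ for the early minima is far too weak to isolate the late one.
\item The observation that the Lagrange basis is large at $y$ concerns the size of $p(y)$, not whether $p(y)=0$.
\end{itemize}
Your fallback rests on a false premise. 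Adding finitely many points does not change the transfinite diameter, so $d(K\cup\{y\})=d(K)$; compare Lemma~\ref{dKless1}, which gives $d([0,1]\cup\{a\})\le 1/4=d([0,1])$.

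\textbf{Step~3.} This is not a proof either.
\begin{itemize}
\item For primitive $g$, the integer multiples of $g$ form a saturated, rank-deficient sublattice. There is no index growing like $c^{n}$.
\item Adding to $q$ integer combinations of $X^s$ times elements of $B(K,\mathbb{Z})$ can never break divisibility by $g$, because $g$ divides every element of $B(K,\mathbb{Z})$.
\end{itemize}

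\textbf{The missing ingredient.} You need a way to manufacture integer polynomials that are uniformly small on $K$ yet nonzero at a chosen point. In Ferguson's theory this is the approximation theorem. Let $L$ be compact and conjugation-symmetric with $d(L)<1$, and suppose real polynomials are dense in the symmetric continuous functions on $L$. For $L=K\cup F$ with $K\subseteq\mathbb{R}$ and $F$ finite and symmetric, this density follows from Mergelyan's theorem. The theorem then says: every symmetric continuous function on $L$ that agrees on the finite set $J(L,\mathbb{Z})$ with an integer polynomial is a uniform limit of integer polynomials. Its proof has two parts.
\begin{itemize}
\item The engine is an estimate. Let $p\in B(L,\mathbb{Z})$ with $\rho=\|p\|_L$. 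Then for every real polynomial $S$, the product $p^kS$ lies within $O(\rho^k)$ of $\mathbb{Z}[X]$, uniformly in $S$. To see this, expand $S=\sum_l s_l p^l$ with $\deg s_l<\deg p$ and round the coefficients of each $s_l$.
\item A partition of unity over finitely many such $p$ finishes the argument.
\end{itemize}

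Apply this on $L=K\cup C_y$, where $C_y$ is the set of conjugates of $y$. The point is that $L$ has the \emph{same} transfinite diameter as $K$, the opposite of your fallback's premise. This decouples the values at $y$ from those on $K$, and steps~2 and~3 reduce to arithmetic in $\mathbb{Z}[x]$:
\begin{itemize}
\item If some conjugate of $x$ lies outside $K$, Minkowski gives a nonzero $\beta\in\mathbb{Z}[x]$ that is tiny at every embedding sending $x$ into $K$.
\item If $x$ is not an algebraic integer, $\mathbb{Z}[x]$ is not discrete in $\mathbb{Q}(x)\otimes\mathbb{R}$.
\end{itemize}
Without something of this kind, your proposal establishes only the easy inclusion.
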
 

 Let $E \subseteq \mathbb{R}$ be compact. By letting 
\begin{equation}\label{definetau}
 \tau_{n}(E) = \inf\{ \| p \|_{E} : p \in \mathbb{R}[X] \ \text{is monic of degree $n$} \}, 
\end{equation}
 the standard equality between the transfinite diameter and the Chebyshev constant
 \cite{Ferguson1968} gives us that
\begin{equation}\label{rootlimit}
 d(E) = \lim_{n \to \infty} \tau_{n}^{1/n}(E), 
\end{equation}
 again with reference to the work of Ferguson \cite{Ferguson1968}. 
 We also recall that the order-$m$ Chebyshev polynomial of the first kind
 is such that 
\begin{equation}\label{defineChebyshev}
 T_m(\cos \theta) = \cos(m \theta). 
\end{equation}
 For a compact subset $C$ of $\mathbb{R}$, 
 we let $\| \cdot \|_{C}$ denote the uniform/supremum norm
 for $C$, which, for a continuous function 
 $p$ defined on $C$, gives us that 
 $\| p \|_{C} = \max_{x \in C} |p(x)|$. 

\begin{lemma}\label{dKless1}
 If $a \not\in [0, 1]$, then $d\big( [0, 1] \cup \{ a \} \big) < 1$. 
\end{lemma}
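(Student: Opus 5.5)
We will bound $d(E)$ for $E = [0,1] \cup \{a\}$ through the Chebyshev-constant characterization \eqref{rootlimit}. Because $d(E) = \lim_{n\to\infty} \tau_n^{1/n}(E)$, it is enough to build, for infinitely many $n$, a monic polynomial $p_n$ of degree $n$ whose sup norm on $E$ is at most $C\rho^n$ for some fixed $\rho < 1$ and constant $C$. We use that the limit in \eqref{rootlimit} exists, so a bound along a subsequence controls it.

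\textbf{Construction.} Let $S_m(x) = 2^{1-2m} T_m(2x-1)$ be the monic Chebyshev polynomial of degree $m$ adapted to $[0,1]$; by \eqref{defineChebyshev} it satisfies $\|S_m\|_{[0,1]} = 2^{1-2m}$. Put
$$p_{m+1}(x) = (x-a)\,S_m(x),$$
which is monic of degree $m+1$ and vanishes at $a$. Since $a \notin [0,1]$, the distance $|x-a|$ for $x\in[0,1]$ is at most $M = \max(|a|,|a-1|)$. So on $[0,1]$ we have $|p_{m+1}| \le M\,2^{1-2m}$, and $p_{m+1}(a)=0$. It follows that $\|p_{m+1}\|_E \le 2M\,4^{-m}$.

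\textbf{Conclusion.} We get $\tau_{m+1}(E) \le 2M\,4^{-m}$. Taking $(m+1)$-th roots and letting $m\to\infty$ gives $d(E) \le 1/4 < 1$, which proves the lemma. This bound reflects the fact that adjoining a single point does not change the transfinite diameter, which stays equal to $d([0,1]) = 1/4$.

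The only step that needs care is justifying the identity \eqref{rootlimit} for the set $E$, which is not an interval; this follows from the general result of Ferguson cited above. The remaining estimates are routine. The hypothesis $a\notin[0,1]$ is not actually needed to get the bound; the argument works for any real $a$.
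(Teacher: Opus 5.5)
Your proof is correct and is essentially the paper's argument. You multiply the monic Chebyshev polynomial $2^{1-2m}T_m(2X-1)$ by $(X-a)$, bound its sup norm on $[0,1]\cup\{a\}$ by $\max\{|a|,|1-a|\}\,2^{1-2m}$, and pass to the limit via \eqref{rootlimit} to get $d\le 1/4$; your side remarks that the hypothesis $a\notin[0,1]$ is not needed and that in fact $d=1/4$ are also correct.
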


\begin{proof}
 For a positive integer $m$, the leading coefficient of the order-$m$ Chebyshev polynomial is $2^{m-1}$. This gives us that 
\begin{equation}\label{defineumX} 
 u_{m}(X) = 2^{1-2m} T_{m}(2X-1)
\end{equation}
 is a monic polynomial of degree $m$. From the relation in \eqref{defineChebyshev}, we find that: If $x \in [0, 1]$, then $|T_{m}(2x-1)| \leq 
 1$. From this relation along with \eqref{defineumX}, we find that 
\begin{equation}\label{normum} 
 \| u_{m} \|_{[0, 1]} \leq 2^{1-2 m}. 
\end{equation}
 Now, for an arbitrary integer $n$ exceeding $1$, 
 let $p_{n}(X) = (X - a) u_{n-1}(X)$, 
 yielding a monic polynomial of degree $n$ that vanishes when evaluated at $a$. 

 Now, define $M_a = \max_{0 \leq x \leq 1} |x-a|$. Also, writing $K$ in place of $[0, 1] \cup \{ a \}$, 
 observe that 
\begin{align*}
 \| p_{n} \|_{K}
 & = \max_{x \in K} | p_n(x) | \\ 
 & = \max\left\{ \max_{x \in [0, 1]} |p_{n}(x)|, |p_{n}(a)| \right\} \\ 
 & = \max\left\{ \| p_{n} \|_{[0, 1]}, 0 \right\} \\ 
 & = \| p_{n} \|_{[0, 1]}. 
\end{align*}
 The definitions of $p_n$ and $M_a$ together
 give us that 
\begin{equation}\label{boundpnx} 
 |p_{n}(x)| = |x - a| |u_{n-1}(x)| \leq M_a \| u_{n-1} \|_{[0, 1]}
\end{equation}
 for each element $x$ in $[0, 1]$. Taking the maximum value of 
 $|p_{n}(x)| $ for $x \in [0, 1]$, we find that \eqref{boundpnx}
 gives us that 
\begin{equation}\label{pn01upper} 
 \| p_{n} \|_{[0, 1]} \leq M_a \| u_{n-1} \|_{[0, 1]}. 
\end{equation}
 A combined application of \eqref{normum}, the relation $\| p_{n} \|_{K} = \| p_{n} \|_{[0, 1]}$, along with the relation in \eqref{pn01upper} 
 gives us that 
\begin{equation}\label{upperpnK}
 \| p_{n} \|_{K} \leq M_a 2^{1-2(n-1)}. 
\end{equation}
 From the definition in \eqref{definetau} together with \eqref{upperpnK} (recalling that $p_n$ is monic
 and of degree $n$), we find that 
\begin{equation}\label{taunKleq} 
 \tau_{n}(K) \leq M_{a} 2^{1 - 2(n-1)}. 
\end{equation}
 From the limiting relation in \eqref{rootlimit} together with the upper bound in 
 \eqref{taunKleq}, we find that 
\begin{equation}\label{dKlimsup} 
 d(K) \leq \limsup_{n \to \infty} \left( M_a 2^{1 - 2(n-1)} \right)^{\frac{1}{n}} = \frac{1}{4}, 
\end{equation}
 and, from \eqref{dKlimsup}, we obtain the desired conclusion whereby $d(K) < 1$. 
 \end{proof}

\begin{lemma}\label{lastlemma}
 Let $K \subseteq \mathbb{R}$ be compact, with $0, 1 \in K$. Assume that $d(K) < 1$ and $J_0(K, \mathbb{Z}) = \{ 0, 1 \}$. Then there is a 
 polynomial $Q \in \mathbb{Z}[X]$
 satisfying $Q(0) = Q(1) = 0$ 
 and satisfying
\begin{equation}\label{Qxin01} 
 Q(x) \in (0, 1)
\end{equation}
 for each element $x$ in $K \setminus \{ 0, 1 \}$. 
\end{lemma}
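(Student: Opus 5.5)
The plan is to combine Ferguson's theorem (Theorem~\ref{thmFerguson}) with a finiteness argument, and then to raise the resulting polynomials to a high even power so that their sum stays below $1$.

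\emph{Step 1: separating polynomials.} Since $d(K)<1$, Theorem~\ref{thmFerguson} gives $J(K,\mathbb{Z}) = J_0(K,\mathbb{Z}) = \{0,1\}$. By \eqref{defineJKZ}, this means that for every $x \in K\setminus\{0,1\}$ there is some $p \in B(K,\mathbb{Z})$ with $p(x)\neq 0$. Any $p\in B(K,\mathbb{Z})$ automatically vanishes at $0$ and $1$. Indeed, $p(0)$ and $p(1)$ are integers, and $0,1\in K$ forces $|p(0)|,|p(1)|<1$.

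If $K=\{0,1\}$, the statement is vacuous: take $Q=0$. Otherwise, pick any $x_0\in K\setminus\{0,1\}$ and some $p_1\in B(K,\mathbb{Z})$ with $p_1(x_0)\neq0$. Then $p_1$ is a nonzero polynomial, so its zero set in $K$ is finite, say $\{0,1,z_1,\dots,z_s\}$ with the $z_i\notin\{0,1\}$. For each $z_i$, choose $p_{i+1}\in B(K,\mathbb{Z})$ with $p_{i+1}(z_i)\neq 0$. Then the common zero set of $p_1,\dots,p_r$ in $K$, where $r=s+1$, is exactly $\{0,1\}$. This finiteness step is what circumvents the non-compactness of $K\setminus\{0,1\}$: we never need a uniform lower bound near $0$ or $1$, only pointwise nonvanishing.

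\emph{Step 2: the norm bound.} Let $\delta=\max_{1\le i\le r}\|p_i\|_K$. Since each $p_i$ lies in $B(K,\mathbb{Z})$ and there are finitely many of them, we have $\delta<1$. Choose $m\ge1$ with $r\,\delta^{2m}<1$, and set
\[
Q(X)=\sum_{i=1}^{r} p_i(X)^{2m}\in\mathbb{Z}[X].
\]

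\emph{Step 3: verifying the properties.} We have $Q(0)=Q(1)=0$ because every $p_i$ vanishes at $0$ and $1$. For $x\in K\setminus\{0,1\}$, every term of $Q(x)$ is nonnegative, and at least one $p_i(x)$ is nonzero, so $Q(x)>0$. On the other hand, $Q(x)\le r\delta^{2m}<1$. This gives \eqref{Qxin01}.

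I expect the only delicate point to be Step 1, namely extracting finitely many polynomials whose common zero set is exactly $\{0,1\}$. This is handled by using the finiteness of the zero set of a single nonzero polynomial.
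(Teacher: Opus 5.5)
Your proposal is correct and follows essentially the same route as the paper's proof. Both arguments use Ferguson's theorem to get $J(K,\mathbb{Z})=\{0,1\}$, then take one $p\in B(K,\mathbb{Z})$ nonvanishing at a chosen point of $K\setminus\{0,1\}$ together with finitely many further members of $B(K,\mathbb{Z})$ that do not vanish at its finitely many extra zeros in $K$, and finally sum their $2N$-th powers, with $N$ large enough that (number of polynomials)$\cdot\rho^{2N}<1$, to produce $Q$.
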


\begin{proof}
 From the assumptions such that $K \subseteq \mathbb{R}$ 
 is compact, that $d(K) < 1$, and that 
 $J_0(K, \mathbb{Z}) = \{ 0, 1 \}$, the above formulation of Ferguson's theorem \cite[Theorem 6.4]{Ferguson1968}, as in 
 Theorem \ref{thmFerguson} above, gives us that 
\begin{equation}\label{fromFerguson} 
 J(K, \mathbb{Z}) = J_0(K, \mathbb{Z}) = \{ 0, 1 \}. 
\end{equation} 

 For the case whereby $K = \{ 0, 1 \}$, we may let $Q = 0$, so that 
 \eqref{Qxin01} holds vacuously for 
 $x$ in $K \setminus \{ 0, 1 \}$. 

 Now, assume that $K \setminus \{ 0, 1 \} \neq \varnothing$. 
 This leads us to pick a point $\xi$ in the nonempty set difference $K \setminus \{ 0, 1 \}$. 
 From \eqref{fromFerguson}, we see that 
 $\xi \not\in J(K, \mathbb{Z})$. Recalling \eqref{defineJKZ}, 
 since $\xi \not\in J(K, \mathbb{Z})$, we deduce that 
 there exists a polynomial $p \in B(K, \mathbb{Z})$ satisfying $p(\xi) \neq 0$, 
 noting that $p$ is not the zero polynomial. 
 Now, define
\begin{equation}\label{definecalZ}
 \mathcal{Z} = \{ x \in K \setminus \{ 0, 1 \} : p(x) = 0 \}. 
\end{equation}
 Recalling that $p$ is a single-variable and nonzero polynomial,  
 we find that $p$ has only finitely many real zeroes. 
 This gives us that the right-hand side of 
 \eqref{definecalZ} is finite, and we proceed to write 
 $\mathcal{Z} = \{ x_1, x_2, \ldots, x_m \}$, allowing for the 
 possibility that $m = 0$. 

 Again recalling \eqref{defineJKZ}, 
 we find that $x_i \not\in J(K, \mathbb{Z})$ for each index $i$ $ \in$ $ \{ 1$, $2$, $ \ldots$, $ m \}$. 
 Consequently, for each $i$ in $\{ 1, 2, \ldots, m \}$, 
 there exists a polynomial $p_{i} \in B(K, \mathbb{Z})$
 satisfying 
\begin{equation}\label{pixineq0} 
 p_i(x_i) \neq 0. 
\end{equation} 
 Now, let $x \in K \setminus \{ 0, 1 \}$. If $p(x) \neq 0$, then $x$ is not a common zero 
 of the polynomials $p$, $p_1$, $p_2$, $\ldots$, $p_m$. 
 If $p(x) = 0$, 
 then $x \in \mathcal{Z}$, and this gives us that $x = x_i$ for some index $i$ in $\{ 1, 2, \ldots, m \}$. 
 Consequently, the relation in \eqref{pixineq0}
 gives us that $p_i(x) = p_i(x_i) \neq 0$, 
 i.e., so that $x$ is not a common zero among 
 $p$, $p_1$, $p_2$, $\ldots$, $p_m$. 
 Consequently, we obtain the inclusion
\begin{equation}\label{forwardinclude} 
 \{ x \in K : p(x) = p_1(x) = p_2(x) = \cdots = p_m(x) = 0 \} \subseteq \{ 0, 1 \}. 
\end{equation}
 Let $q \in B(K, \mathbb{Z})$. From the definition of $B(K, \mathbb{Z})$, we find that 
 $\| q \|_{K} < 1$. 
 Since $0$ and $1$ are in $K$, we find that 
 $|q(0)| < 1$ and $|q(1)| < 1$. 
 Since $q$ is in $\mathbb{Z}[X]$, we find that $q(0)$ and $q(1)$ are integers, 
 so that $q(0) = q(1) = 0$. Moreover, since $q$ is in $B(K, \mathbb{Z})$, we see that 
 $0$ and $1$ are both in $J(K, \mathbb{Z})$. 

 Applying the above conclusion to the particular polynomials
 $p$, $p_1$, $p_2$, $\ldots$, $p_m$ in $B(K, \mathbb{Z})$, 
 we find that 
\begin{equation}\label{pi0equalspi1} 
 p(0) = p(1) = 0 \quad \text{and} \quad p_i(0) = p_i(1) = 0 
\end{equation}
 for $i \in \{ 1, 2, \ldots, m \}$. 
 Combining this with \eqref{forwardinclude}, we obtain
\begin{equation}\label{mutualinclusion}
 \{ x \in K : p(x) = p_1(x) = p_2(x) = \cdots = p_m(x) = 0 \} = \{ 0, 1 \}. 
 \end{equation}

 Since 
 $p$, $p_1$, $p_2$, $\ldots$, $p_m$ are in $B(K, \mathbb{Z})$,    
 we find that 
\begin{equation}\label{normKpi} 
 \| p \|_{K} < 1 \quad \text{and} \quad \| p_i \|_{K} < 1 
\end{equation}
 for each index $i$ in $\{ 1, 2, \ldots, m \}$. 
 We proceed to define
\begin{equation}\label{definerho} 
 \rho = \max\left\{ \| p \|_{K}, \| p_1 \|_{K}, \| p_2 \|_K, \ldots, \| p_m \|_{K} \right\}. 
\end{equation}
 From \eqref{definerho} and from the preceding norm inequalities, 
 we have that $\rho < 1$. Moreover, since $p(\xi) \neq 0$, we have
 $\| p \|_{K} \geq |p(\xi)| > 0$. 
 Since $\rho \geq \| p \|_{K}$, it follows that 
 $ 0 < \rho < 1$. 
 Now, let $N$ be a sufficiently large natural number
 satisfying
\begin{equation}\label{Nlarge}
 (m + 1) \rho^{2N} < 1. 
\end{equation}
 Also, we define 
\begin{equation}\label{defineQX} 
 Q(X) = p^{2N}(X) + \sum_{i=1}^m p_i^{2N}(X). 
\end{equation}
 Since 
 $p$, $p_1$, $p_2$, $\ldots$, $p_m$ $\in$ $B(K, \mathbb{Z}) \subseteq \mathbb{Z}[X]$, 
 the definition in \eqref{defineQX} gives
 $Q(X) \in \mathbb{Z}[X]$. 
 Moreover, from \eqref{pi0equalspi1}, we find that $Q(0) = Q(1) = 0$. 
 
 Now, let $x$ be an element of $K \setminus \{ 0, 1 \}$. From the equality in \eqref{mutualinclusion},  we deduce that at least one of $p(x)$, 
 $p_1(x)$, $p_2(x)$, $\ldots$, $p_m(x)$
 is nonzero. As a consequence, we obtain that 
 $ Q(x) = |p(x)|^{2N} + \sum_{i=1}^{m} |p_{i}(x)|^{2N} > 0$. Also, we find that 
\begin{align*}
 Q(x) & = |p(x)|^{2N} + \sum_{i=1}^{m} |p_i(x)|^{2N} \\ 
 & \leq \| p \|_{K}^{2N} + \sum_{i=1}^{m} \| p_i \|_{K}^{2N} \\ 
 & \leq (m+1) \rho^{2N} \\
 & < 1, 
\end{align*}
 exploiting \eqref{Nlarge}. 
 This gives us that $0 < Q(x) < 1$ for each element $x$ in $K \setminus \{ 0, 1 \}$, as desired. 
\end{proof}

\subsection{Proof of Pinch's conjecture}

\begin{theorem}
 Let $a$ denote a totally real algebraic integer. If $a$ is sparse, then each algebraic conjugate of $a$
 distinct from $a$ is in $(0, 1)$. 
\end{theorem}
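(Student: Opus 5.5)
We argue by contraposition. Assume $a$ is a totally real algebraic integer such that some conjugate $b \neq a$ of $a$ lies outside $(0,1)$; we show that $a$ is not sparse. By Lemma \ref{Pinchnotsparse}, it suffices to produce $P \in D(X)$ with $0 < P(a) < 1$.

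First dispose of degenerate cases. If $a \in (0,1)$, then $P(X) = X \in D_1(X)$ already gives $0 < P(a) < 1$. If $a \in \{0,1\}$, then $a$ is rational, so it has no nontrivial conjugate and there is nothing to prove. So assume $a \notin [0,1]$. Since the conjugates of $a$ are distinct, a nontrivial conjugate $b \neq a$ either lies in $\{0,1\}$ or lies outside $[0,1]$. The first option is impossible: $m_a$ is irreducible of degree at least $2$, so it has no rational root.

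Set $K = [0,1] \cup \{a\}$. This set is compact, contains $0$ and $1$, and satisfies $d(K) < 1$ by Lemma \ref{dKless1}. Next we compute $J_0(K,\mathbb{Z})$. The classes $\{0\}$ and $\{1\}$ are complete conjugacy classes of algebraic integers contained in $K$. Any other complete class contained in $K$ must contain a point of $K \setminus \{0,1\}$. If it contains $a$, it is the class of $a$, which includes $b \notin K$ (because $b \notin [0,1]$ and $b \neq a$); so it is not contained in $K$. Otherwise the class lies in $[0,1]$, and every element of it is a totally real algebraic integer, all of whose conjugates lie in $[0,1]$. We must exclude this. If $\alpha \neq 0,1$ is such an integer, then $\alpha$ and $1-\alpha$ are nonzero algebraic integers whose conjugates all lie in $(0,1)$. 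The norm $N(\alpha(1-\alpha))$ is then a nonzero integer of absolute value $<1$, which is a contradiction. Hence $J_0(K,\mathbb{Z}) = \{0,1\}$.

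Lemma \ref{lastlemma} now yields $Q \in \mathbb{Z}[X]$ with $Q(0) = Q(1) = 0$ and $Q(x) \in (0,1)$ for all $x \in K \setminus \{0,1\}$, that is, for all $x \in (0,1)$ and for $x = a$. Lemma \ref{lemmaBernstein} then gives $Q \in D(X)$, and $0 < Q(a) < 1$, so Lemma \ref{Pinchnotsparse} shows that $a$ is not sparse. This contradicts the hypothesis.

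The heavy lifting is all in Lemmas \ref{dKless1}, \ref{lastlemma} and \ref{lemmaBernstein}, via Ferguson's theorem. The only genuinely new step here is verifying $J_0(K,\mathbb{Z}) = \{0,1\}$. The delicate points there are that the conjugate $b$ excludes $a$'s class from $K$, and the norm argument ruling out classes inside $[0,1]$. One must also note that $Q$ must meet the endpoint conditions of Lemma \ref{lemmaBernstein}; these hold because $Q(0) = Q(1) = 0 \in \{0,1\}$.
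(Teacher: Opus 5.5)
Your proof is correct and is essentially the paper's proof. It uses the same set $K=[0,1]\cup\{a\}$, Lemma~\ref{dKless1} for $d(K)<1$, the verification $J_0(K,\mathbb{Z})=\{0,1\}$, and then Lemmas~\ref{lastlemma}, \ref{lemmaBernstein} and \ref{Pinchnotsparse} in the same order. The only differences are cosmetic: you argue by contraposition rather than contradiction, and you rule out conjugacy classes inside $[0,1]$ via the norm of $\alpha(1-\alpha)$, where the paper uses the product of the conjugates of $\alpha$ alone (which already suffices).
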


\begin{proof}
 Assume that $a$ is a sparse and totally real algebraic integer. 
 
 If $a$ is an integer, then the minimal polynomial (with respect to an indeterminate $X$) of $a$ over $\mathbb{Q}$
 is $X-a$, so that $a$ does not have any nontrivial algebraic conjugates, 
 and the desired conclusion holds vacuously. 
 So, we henceforth assume that $a$ is not an integer. 

 By way of contradiction, suppose that $a \in (0, 1)$. In this case, the identity polynomial
 $P(X) = X$ would be such that $0 < P(a) = a < 1$, but then 
 Pinch's lemma, as formulated above in Lemma \ref{Pinchnotsparse}, 
 would give us that $a$ would not be sparse, 
 contradicting our above assumption that $a$ is sparse. 
 So, we henceforth let $a$ be outside of $(0, 1)$. 
 Moreover, since we are letting $a$ be a non-integer, 
 we have that 
\begin{equation}\label{notinclosedunit}
 a \not\in [0, 1]. 
\end{equation}

 Now, by way of contradiction, assume that $a$ has a nontrivial conjugate $a^{\ast}$ such that 
\begin{equation}\label{starnotinopen} 
 a^{\ast} \not\in (0, 1). 
\end{equation}
 From the assumption that $a$ is totally real, we find that $a^{\ast} \in \mathbb{R}$. We see that $a^{\ast}$ could not be equal to $0$, 
 because, otherwise, letting $m_a(X)$ denote the minimal polynomial of $a$, 
 we would have that $m_a(0) = 0$, 
 with $X$ dividing $m_a(X)$, 
 but the irreducibility of $m_a$ over $\mathbb{Q}$ 
 would thus imply that $m_a(X) = X$, 
 i.e., so that $a = 0$, but this contradicts \eqref{notinclosedunit} (and contradicts that $a \not\in \mathbb{Z}$). 
 A symmetric argument gives us that $a^{\ast} \neq 1$, i.e., since, otherwise, 
 we would have that $m_a(X) = X-1$, with $a = 1$. 
 So, from \eqref{starnotinopen}, we obtain that 
\begin{equation}\label{aastnotinunit} 
 a^{\ast} \not\in [0, 1]. 
\end{equation}

 We proceed to define $K = [0, 1] \cup \{ a \}$. From \eqref{notinclosedunit}, we find that 
 Lemma \ref{dKless1} gives us that 
\begin{equation}\label{strongerpossible}
 d(K) < 1,
\end{equation} 
 noting that a stronger estimate may be obtained from the proof of Lemma \ref{dKless1}. 

 We see that 
\begin{equation}\label{01containedJ0} 
 \{ 0, 1 \} \subseteq J_0(K, \mathbb{Z}), 
\end{equation}
 since the conjugacy classes for $0$ and $1$ are $\{ 0 \}$ and $\{ 1 \}$, respectively, 
 and each of these singleton sets is contained in $K$. 

 Now, let $\alpha \in J_0(K, \mathbb{Z})$. We thus have that the conjugacy class of $\alpha$ is contained in $K$. 
 
 By way of contradiction, suppose that $a$ is a conjugate of $\alpha$. 
 So the minimal polynomial $m_{\alpha}(X)$ would have $a$ as a root. 
 Consequently, the minimal polynomial $m_a(X)$ would divide $m_{\alpha}(X)$
 in $\mathbb{Q}[X]$, and, since both of these polynomials are monic and irreducible, 
 we would have that $m_{\alpha}(X) = m_{a}(X)$. So, the conjugacy classes of $\alpha$ and $a$ would be equal, 
 i.e., so that the conjugacy class of $\alpha$ would necessarily contain $a^{\ast}$, 
 recalling that $a^{\ast}$ is defined as a nontrivial conjugate of $a$. 
 By definition of a nontrivial conjugate, we have that $a^{\ast} \neq a$, and this and 
 \eqref{aastnotinunit} together give us that 
 $a^{\ast} \not\in K = [0, 1] \cup \{ a \}$. 
 We thus obtain a contradiction, since the conjugacy class of $\alpha$ is contained in $K$. 

 So, none of the conjugates of $\alpha$ equal $a$. Since all of these conjugates are in $K = [0, 1] \cup \{ a \}$, 
 all of the conjugates of $\alpha$ are in $[0, 1]$. 

 Let the conjugates of $\alpha$ be denoted with $\beta_1$, $\beta_2$, $\ldots$, $\beta_d$
 for some positive integer $d$. 
 The preceding argument gives us that 
\begin{equation}\label{boundbetai} 
 0 \leq \beta_i \leq 1
\end{equation}
 for each index $i$ in $\{ 1, 2, \ldots, d \}$. If at least one expression of the form $\beta_i$ were to be equal to $0$, then the minimal 
 polynomial $m_{\alpha}(X)$ would be the identity polynomial, 
 with $m_{\alpha}(X) = X$, and this would give us that $\alpha = 0$. 
 Alternatively, suppose that each expression of the form $\beta_i$ were positive. 
 The bounds in \eqref{boundbetai} 
 would give us that 
\begin{equation}\label{boundprodbeta} 
 0 < \prod_{i=1}^{d} \beta_i \leq 1. 
\end{equation}
 From the assumption that $\alpha$ is an algebraic integer, we have that its minimal polynomial is monic and has coefficients in 
 $\mathbb{Z}$, so that 
\begin{equation}\label{minpoly0} 
 \prod_{i=1}^{d} \beta_i = (-1)^d m_{\alpha}(0) \in \mathbb{Z}. 
\end{equation}
 Since the left-hand side of the equality \eqref{minpoly0} is positive, i.e., from \eqref{boundprodbeta}, the upper bound in 
 \eqref{boundprodbeta}, together with the left-hand side of the equation in \eqref{minpoly0} being an integer,  gives    us that 
 $ \prod_{i = 1}^{d} \beta_i = 1$. 
 Each expression of the form $\beta_i$ is in $(0, 1]$, from 
 \eqref{boundbetai} and 
 \eqref{boundprodbeta} together. 
 We see that a finite product of values in $(0, 1]$ equals $1$ only when each factor equals $1$, 
 i.e., so that $\beta_1 = \beta_2 = \cdots = \beta_d = 1$, which implies that $\alpha = 1$. 
 This establishes the reverse inclusion corresponding to \eqref{01containedJ0}, writing 
\begin{equation}\label{J0equals2set}
 J_0(K, \mathbb{Z}) = \{ 0, 1 \}. 
\end{equation}

 So, from \eqref{strongerpossible} and \eqref{J0equals2set} together, we find that all of the conditions given in Lemma \ref{lastlemma} 
 hold. So, Lemma \ref{lastlemma} gives us that 
 there is a polynomial $Q(X)$ in $\mathbb{Z}[X]$
 satisfying $Q(0) = Q(1) = 0$
 and satisfying 
\begin{equation}\label{universal}
 \forall x \in K \setminus \{ 0, 1 \} \ 0 < Q(x) < 1. 
\end{equation}
 Noting the containment of $(0, 1)$ in $K \setminus \{ 0, 1 \}$,   we deduce that $Q(x) \in (0, 1)$ for each member $x$ in $(0, 1)$.    From  
  Lemma \ref{lemmaBernstein}, we thus have that 
\begin{equation}\label{QinDX} 
 Q \in D(X).
\end{equation} 
 Moreover, since $a \in K \setminus \{ 0, 1 \}$ (recalling that $K = [0, 1] \cup \{ a \}$ and recalling \eqref{notinclosedunit}),  we find that 
 \eqref{universal} gives us that  $0 < Q(a) < 1$. From this and \eqref{QinDX} together, we find that 
 Pinch's lemma, as in Lemma \ref{Pinchnotsparse} above, 
 gives us that $a$ is not sparse, contradicting our initial assumption that $a$ is sparse. 
 We have thus arrived at a contradiction giving us that it is not the case that
 $a$ has a nontrivial conjugate $a^{\ast}$ satisfying $a^{\ast} \not\in (0, 1)$. 
\end{proof}

\subsection*{Acknowledgements}
 The author acknowledges extensive interactions with GPT-5.6 Pro during the exploratory and proof-development stages of this work. All 
 AI-generated suggestions were substantially revised, corrected, and independently verified by the author, who assumes full responsibility 
 for the mathematical content. 

\bibliographystyle{plain}
\bibliography{seprefe}

@article {Pinch1985,
    AUTHOR = {Pinch, R. G. E.},
     TITLE = {{$a$}-convexity},
   JOURNAL = {Math. Proc. Cambridge Philos. Soc.},
  FJOURNAL = {Mathematical Proceedings of the Cambridge Philosophical
              Society},
    VOLUME = {97},
      YEAR = {1985},
    NUMBER = {1},
     PAGES = {63--68},
      ISSN = {0305-0041,1469-8064},
   MRCLASS = {11R45 (11R04 52A01)},
  MRNUMBER = {764493},
MRREVIEWER = {P.\ McMullen},
       DOI = {10.1017/S0305004100062587},
       URL = {https://doi.org/10.1017/S0305004100062587},
}

@article {FennerGreenHomer2026,
    AUTHOR = {Fenner, Stephen and Green, Frederic and Homer, Steven},
     TITLE = {Fixed-parameter extrapolation and aperiodic order},
   JOURNAL = {Discrete Comput. Geom.},
  FJOURNAL = {Discrete \& Computational Geometry. An International Journal
              of Mathematics and Computer Science},
    VOLUME = {76},
      YEAR = {2026},
    NUMBER = {1},
     PAGES = {1--74},
      ISSN = {0179-5376,1432-0444},
   MRCLASS = {52C23 (52-02)},
  MRNUMBER = {5084666},
       DOI = {10.1007/s00454-025-00816-4},
       URL = {https://doi.org/10.1007/s00454-025-00816-4},
}

@article {MasakovaPelantovaSvobodova2000,
    AUTHOR = {Mas\'akov\'a, Zuzana and Pelantov\'a, Edita and Svobodov\'a,
              Milena},
     TITLE = {Characterization of cut-and-project sets using a binary
              operation},
   JOURNAL = {Lett. Math. Phys.},
  FJOURNAL = {Letters in Mathematical Physics},
    VOLUME = {54},
      YEAR = {2000},
    NUMBER = {1},
     PAGES = {1--10},
      ISSN = {0377-9017,1573-0530},
   MRCLASS = {52C23 (82D25)},
  MRNUMBER = {1846718},
       DOI = {10.1023/A:1007684402406},
       URL = {https://doi.org/10.1023/A:1007684402406},
}

@article{BhattacharyaRosenfeld2000,
title = {a-Convexity},
journal = {Pattern Recognition Letters},
volume = {21},
number = {10},
pages = {955-957},
year = {2000},
issn = {0167-8655},
doi = {https://doi.org/10.1016/S0167-8655(00)00051-9},
url = {https://www.sciencedirect.com/science/article/pii/S0167865500000519},
author = {Prabir Bhattacharya and Azriel Rosenfeld},
}

@article{MasakovaPateraPelantova2001,
author = {Masáková, Z and Patera, J and Pelantová, E},
title = {Exceptional algebraic properties of the three quadratic irrationalities observed in quasicrystals},
journal = {Canadian Journal of Physics},
volume = {79},
number = {2-3},
pages = {687-696},
year = {2001},
doi = {10.1139/p01-003},

URL = { 
    
        https://doi.org/10.1139/p01-003},}

@article{Svobodova2001,
author = {Milena Svobodová},
title = {S-convexity and cut-and-project sets},
journal = {Ferroelectrics},
volume = {250},
number = {1},
pages = {175--177},
year = {2001},
publisher = {Taylor \& Francis},
doi = {10.1080/00150190108225059},
URL = { 
            https://doi.org/10.1080/00150190108225059
    },}

@article{Ferguson1968,
 author = {Ferguson, Le Baron O.},
 title = {Uniform approximation by polynomials with integral coefficients I},
 fjournal = {Pacific Journal of Mathematics},
 journal = {Pac. J. Math.},
 volume = {27},
 pages = {53--69},
 year = {1968},
 language = {English},
 doi = {10.2140/pjm.1968.27.53},
URL = { 
            https://doi.org/10.2140/pjm.1968.27.53},}

\end{document}